\DeclarePairedDelimiter\floor{\lfloor}{\rfloor}
\newtheorem{prop}{Proposition}
\newtheorem{thrm}{Theorem}[section]
\title{Non Existence of any Arithmetic Progression or Geometric Progression whose each term is a Palindrome}
\author{Sayak Chakrabarty%
  \thanks{Electronic address: \texttt{sayakc@cmi.ac.in}; }}
\affil{Chennai Mathematical Institute\\ Mathematics and Computer Science}
\author{Arhgya Datta%
  \thanks{Electronic address: \texttt{arghyad@cmi.ac.in}}}
\affil{Chennai Mathematical Institute\\ Mathematics and Theoretical Physics}
\begin{document}
\maketitle
\begin{abstract}
We investigate whether there exists an arithmetic progression or geometric progression consisting only palindromic numbers. In this paper we show that the answer to this question is NO. Given the first and final term we will also give an estimate for how large that AP could be and so for the GP given its first term and common ratio.
\end{abstract}

\section{Introduction}
A palindromic number or numeral palindrome is a number that remains the same when its digits are reversed. Like 16461, for example, it is "symmetrical".
In both cases we start with an initial term which is palindrome and supposing the entire sequence consists of only palindromic numbers we will arrive at a contradiction which shows that this sequence of palindrome eventually terminates. Hence in both cases we will give a bound for the maximum number of terms that the given progressions can possibly contain. Here we are only concerned about decimal representation. Here we assume that $gcd(a,2,3,5,11) = gcd(r,2,3,5,11) = 1$ where $a$ and $r$ are the first term and common ratio respectively of the GP.
\section{Arithmetic Progression}
\subsection{Non Existence of any Arithmetic Progression whose each term is Palindrome}
Supposing there is an AP whose all terms are palindromes, let the first term and the common difference be $a$ and $d$ respectively.We will denote the general $n^{th}$ term by $T_{n}$.The central idea will be to show that gaps between two consecutive palindromes can be arbitrarily large. Hence we cannot achieve an infinite AP through a constant common difference.\\
Let us start by getting hold of an algorithm from which it will be possible for us to write a palindrome which exactly succeeds a given one. We simply state here the procedure. Details and explanations can be found at [2].
We start by giving an example from which the procedure will be clear. The procedure will depend on the number of digits of the palindrome.\\
Example:\\
Start with a $5$(odd) digit palindrome, say $17371$. The number $3$ which acts as a pivot will only get replaced by $4$ as to make sure that the new resulting palindrome will be as small as possible such that it exactly succeeds $17371$. So the palindrome right next will be $17471$. The reader may like to notice that in general for palindromes with odd number of digits we are required to keep all the digits same which comes exactly in the first half before the pivot number(assuming it is not $9$).
This along with increasing the pivot digit by 1 will make sure that the number will be
smallest such palindrome which exceeds the given one. The 9 case has to be treated differently.

In case the given palindrome had a pivot which is 9 , say for example 3459543 , we observe the palindrome has to be bigger than at least 3460000 ( because in order to keep it smallest the starting 2 digits should be 34) and since it is palindrome we know it would be a reflection across the pivot number so it ends with 43 and to get the last two digits 43 from 3459543 we have to increase 3459543 by adding the least possible number to it and which can be seen to be possible by changing that 9 to 0 which will give rise to carry 1 in both the succeeded and preceded neighbouring digits of 9. So the palindrome right next to 3459543 would be 3460643. Same can be generalized in case of any palindromes having odd number of digits with occurrence of 9 in pivot. In this paper we do not need the algorithm for even digit palindromes and the reason for the same will be discussed later.\\
\begin{prop}
There exists no AP whose every term is palindrome
\end{prop}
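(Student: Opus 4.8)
The plan is to argue by contradiction: suppose $a,\,a+d,\,a+2d,\dots$ is an infinite arithmetic progression, with common difference $d\ge 1$, every term of which is a palindrome, and write $T_n=a+(n-1)d$. The guiding principle, as announced in the introduction, is that the gap from a palindrome to its immediate successor can be forced to exceed $d$ once the terms are long enough, so that the next term of the progression gets squeezed strictly between two consecutive palindromes and therefore cannot itself be a palindrome.

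To make this quantitative I would first read off the size of the relevant gap from the succession algorithm described above. For a palindrome $P$ with an odd number of digits $2m+1$ whose pivot (middle digit) is not $9$, the algorithm produces its successor simply by raising the pivot by $1$, and since the pivot sits in the place-value $10^{m}$ (and has no mirror partner), the next palindrome is exactly $P+10^{m}$. Hence, if some term $T_n$ is a $(2m+1)$-digit palindrome with pivot different from $9$ and with $10^{m}>d$, then
\[
 T_n < T_{n+1}=T_n+d < T_n+10^{m},
\]
so $T_{n+1}$ lies strictly between $P=T_n$ and its immediate palindromic successor and cannot be a palindrome, contradicting the hypothesis. This is exactly why only the odd-digit algorithm is required: the clean gap $10^{m}$ arises precisely from the non-carrying, non-$9$ pivot case, whereas carries (the pivot-$9$ case) only shrink the gap.

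The step I expect to be the real obstacle is guaranteeing that such a term $T_n$ actually occurs: a priori the progression might dodge odd-digit lengths, or land only on pivot-$9$ palindromes (whose successor gaps are smaller because of carrying), or straddle the length boundaries near powers of ten, where consecutive palindromes can be as close as $2$ apart. I would remove this obstacle by a counting (pigeonhole) argument confined to a single block. The interval $[10^{2m},10^{2m+1})$ contains exactly $9\cdot 10^{m}$ palindromes of length $2m+1$, whereas it contains roughly $9\cdot 10^{2m}/d$ terms of the progression; once $m$ is large enough that $10^{m}>d$, the number of progression terms in the block exceeds the number of palindromes available there. Since all these terms are assumed palindromic and are certainly distinct, this already furnishes the contradiction outright, and en route it forces some term in the block to be a genuine $(2m+1)$-digit palindrome with pivot $\ne 9$, which is precisely the $T_n$ demanded by the gap argument. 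The very same count, applied with the upper endpoint replaced by the prescribed final term $N$, delivers the estimate promised in the abstract: an AP of palindromes ending at $N$ can contain at most on the order of $\sqrt{N}$ terms.
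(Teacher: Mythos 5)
Your argument is correct, and its decisive step is genuinely different from the one in the paper. The paper's proof runs through the successor algorithm: it shows that two consecutive palindromes with $2n+1$ digits differ by at least $10^{n}$, argues separately that the progression must take values with an odd number of digits infinitely often (because the blocks of odd-digit integers have length $9\cdot 10^{2n}\gg d$), and then derives the contradiction from the gap eventually exceeding $d$. You state the same gap bound, but then discard it in favour of a pigeonhole count inside a single block $[10^{2m},10^{2m+1})$: it contains exactly $9\cdot 10^{m}$ palindromes of length $2m+1$ but at least $\lfloor 9\cdot 10^{2m}/d\rfloor$ distinct terms of the progression, and once $10^{m}$ is comfortably larger than $d$ the latter exceeds the former, so not all terms can be palindromes. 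This buys you three things the paper's route does not: you never need the successor algorithm, so the pivot-$9$ carries and the small gaps at digit-length boundaries (which the paper glosses over) simply do not arise; the lemma that the AP hits odd digit counts infinitely often is absorbed into the count; and the same inequality immediately yields the quantitative statement promised in the abstract, namely that a palindromic AP ending at $N$ has $O(\sqrt{N})$ terms. What the paper's approach buys instead is the explicit construction of the next palindrome, which it evidently intends to reuse in the (unfinished) estimation subsection. One small point to make explicit when writing this up: the number of AP terms in the block is at least $9\cdot 10^{2m}/d-1$, so you should verify the strict inequality $9\cdot 10^{2m}/d-1>9\cdot 10^{m}$; it holds as soon as $10^{m}>2d$, say, so the pigeonhole closes for all sufficiently large $m$.
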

\begin{proof} From the above context it can be readily be seen that if $(a_{1}...a_{n}a_{0}a_{n}...a_{1})$ be a $2n+1$ digit palindrome with $a_{0}$ as the pivot then the palindrome next to it will have the first and last $n-1$ terms same. In case $a_{0}$ were 9, then $a_{n}$ would have increased by 1. So the difference between these two would as least be $10^{n}$. So \\
$ l  i  m_{n\to\infty}$  $(z_{n+1}-z_{n}) = \infty$ where $z_{n+1},z_{n}$ are the $(n+1)^{th}$ and $(n)^{th}$ palindromes respectively.

Fact: Given an AP with first term $a$ and common difference $d$, there does not exist $M$ such that $T_{n}$ has only even digits for $n\geq M$. This is because consecutive powers of 10's increases unboundedly as $10^{p+1}-10^{p} = 9.10^{p}$ while $d$ is constant. So $10^{p} > d$ as $p\to\infty$ \\
So the given AP is forced to take values with odd number of digits infinitely often. Since $d$ remains constant it is a contradiction to the fact that difference between two consecutive palindromes having odd number of digits can be arbitrarily large.\\
\end{proof}
\subsection{Estimation of the largest AP with palindromes given its first and last terms}
Given the first and last term $a$,$l$ respectively in order to attain the largest AP, we need to choose the common difference $d$ suitably, in particular we need to have a lower bound for $d$. Let ...

\section{Geometric Progression}
\subsection{Non Existence of any Geometric Progression whose each term is Palindrome}
Suppose we are given a geometric progression whose first term $a$ is a palindrome and common ratio $r$ is rational which consists of only palindromes.\\
Claim: $r$ must be an integer.
\begin{proof}
Suppose the distinct prime factors of $a$ are $p_{1},..p_{k}$ and the distinct prime factors of the denominator of $r$ be $q_{1},..q_{m}$. But since the power of $r$ goes on increasing $ar^{s}$ will not remain integer for some $s \geq S$ for some $S \in \mathbb{N}$. So $r$ must be an integer.
\end{proof}
Assume number of digits of $a$ is $L$ and the number of digits of $r$ is $R$. Introduce a parameter $\lambda \in \mathbb{N}$. We would like to know the smallest term of the GP which has at-least $\lambda.L$ digits. If $t_{k}$ denotes the $k^{th}$ term for this GP, the smallest such $k$ such that $T_{k} = ar^{k} \geq \lambda L$ since a $L$ digit palindrome has size at-least $\sim 10^{L}$\\
$\implies k \geq \dfrac{\lambda. L}{log a + log r} \sim \dfrac{\lambda L}{L+ R -2}$  since x has $1 + \floor*{\log{x}}$ digits\\
\\
\begin{prop}
Given $R > L$ $\nexists$ a GP whose every term is palindrome
\end{prop}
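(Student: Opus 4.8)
The plan is to derive a contradiction from a mismatch between the \emph{first} and \emph{last} digit of the terms $T_k=ar^{k}$. First I would isolate the only arithmetic input needed: since $\gcd(a,10)=\gcd(r,10)=1$ (which follows from the standing hypothesis that $a$ and $r$ are coprime to $2$ and $5$), every term $T_k=ar^{k}$ is coprime to $10$, so its last digit lies in $\{1,3,7,9\}$. If $T_k$ is a palindrome then its first digit equals its last digit, and hence the \emph{leading} digit of $T_k$ would have to lie in $\{1,3,7,9\}$ for \emph{every} $k$. The whole proof thus reduces to showing that the leading digit cannot be permanently trapped in $\{1,3,7,9\}$.

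Next I would analyse the leading digit through the fractional part of $\log_{10}T_k=\log_{10}a+k\log_{10}r$: the leading digit of $T_k$ equals $d$ exactly when $\{\log_{10}a+k\log_{10}r\}\in[\log_{10}d,\log_{10}(d+1))$. The hypothesis $R>L\ge 1$ forces $r\ge 10$, and since $r>1$ is coprime to $10$ we have that $\log_{10}r$ is irrational (otherwise $r^{q}=10^{p}$ would force $10\mid r$). By Weyl's equidistribution theorem the sequence $\{\log_{10}a+k\log_{10}r\}_{k\ge0}$ is equidistributed, in particular dense, in $[0,1)$, so the interval $[\log_{10}2,\log_{10}3)$, which has positive length, is hit for infinitely many $k$. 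For each such $k$ the term $T_k$ has leading digit $2$.

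This is the contradiction: for infinitely many $k$ the term $T_k$ has even leading digit $2$ while its last digit lies in $\{1,3,7,9\}$, so $T_k$ cannot be a palindrome, which proves the Proposition. I would remark that the hypothesis $R>L$ is used here only to guarantee $r>1$, i.e.\ that the progression is non-constant, and that the argument in fact produces infinitely many non-palindromic terms. The same computation is what makes the earlier digit-count estimate effective: since irrational rotations are minimal, one can bound the first index $k$ at which the fractional part enters $[\log_{10}2,\log_{10}3)$, and via $k\gtrsim \lambda L/(L+R-2)$ turn this into an explicit bound on how many initial terms of the progression can all be palindromes.

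The main obstacle is the equidistribution step, and I would treat it in two careful pieces. The first is confirming the irrationality of $\log_{10}r$; this is routine but it is exactly the place where coprimality of $r$ to $10$ (not merely $R>1$) is essential. The second, more delicate point is upgrading ``the target interval is hit'' to ``it is hit infinitely often'': this follows from the minimality of an irrational rotation, or directly from equidistribution, but one should state which tool is invoked rather than appeal to density for a single value of $k$. Everything else, namely the last-digit computation and the leading-digit/fractional-part dictionary, is elementary bookkeeping.
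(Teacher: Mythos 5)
Your proof is correct, but it takes a genuinely different route from the paper's. The paper's proof is a counting argument: it invokes Proposition 4.2 of Banks--Hart--Sakata to bound the number of $\lambda L$-digit palindromes divisible by $t_k = ar^k$, and uses the hypothesis $R>L$ precisely to make that bound of the form $(C/a)\,\alpha^{\lambda}$ with $\alpha<1$, so that for $\lambda$ large there are fewer than one such palindromes --- a contradiction. You instead exploit a first-digit/last-digit mismatch: coprimality of $a$ and $r$ to $10$ (part of the paper's standing hypothesis from the introduction) forces the last digit of every term into $\{1,3,7,9\}$, while the irrationality of $\log_{10}r$ (correctly derived from $r>1$ and $\gcd(r,10)=1$) together with Weyl equidistribution of $\{\log_{10}a+k\log_{10}r\}$ forces the leading digit to equal $2$ for infinitely many $k$, indeed for a set of $k$ of density $\log_{10}(3/2)$. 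Your argument is both more elementary, replacing the analytic input of [1] by the classical Kronecker/Weyl theorem, and strictly stronger: it needs only $r\ge 2$ with $\gcd(ar,10)=1$, so the hypothesis $R>L$ becomes superfluous, and it shows directly that the full GP contains infinitely many non-palindromic terms, which would let one bypass the subsequence reduction of Section 3.2 entirely. What the paper's route buys in exchange is compatibility with its quantitative program of estimating the longest all-palindrome initial segment via the parameter $\lambda$; as you note, a discrepancy or three-distance bound for the rotation by $\log_{10}r$ would make your argument quantitative as well. The one thing to make explicit in a final write-up is that you are relying on the standing coprimality assumption stated in the introduction, since the proposition itself does not repeat it.
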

\begin{proof}
Let us denote $\mathcal{P_{L}}$ the set of all $L$ digit palindromes.\\
$\mathcal{P_{L}}(q):= \{ n \in \mathcal{P_{L}} : n \equiv 0(\mod q) \}$\\ Proposition 4.2 of [1] asserts that if $(q, g(g^{2} − 1)) = 1$, where $g$ is the base and in our case $g=10$. Then for$L \geq  10 + 2q^{2}.log q$ the following asymptotic formula holds:\\
\\
$\|\mathcal{P_{L}}(q)\| = \dfrac{\|\mathcal{P_{L}}\|}{q} + O(\dfrac{\|\mathcal{P_{L}}\|}{q} exp(-\dfrac{L}{2q^{2}}))$\\
Here we obtain a nontrivial bound on $\mathcal{P_{L}}(q)$ without any restrictions on the size or the arithmetic structure of q.\\
So in our context\\
$\mathcal{P_{\lambda L}}(t_{k}):= \{ n \in \mathcal{P_{\lambda L}} : n \equiv 0(\mod ar^{k}) \}$\\
Using the above result \\
\\
$\|\mathcal{P_{\lambda L}}(t_{k})\| \leq \dfrac{C.10^{\frac{\lambda L}{2}}}{a r^{\frac{\lambda l}{L + R-2}}}$ \\
\\
$\implies \|\mathcal{P_{\lambda L}}(t_{k})\| \leq  \dfrac{C}{a} {\dfrac{10^{\frac{\lambda L}{2}}}{r^\frac{\lambda L}{L + R-2}}}$\\
\\
$\implies \|\mathcal{P_{\lambda L}}(t_{k})\| \leq  \dfrac{C}{a} ({\dfrac{10^{\frac{ L}{2}}}{r^\frac{ L}{L + R-2}}})^{\lambda}$\\
\\
Since $R > L$ , $R-1 > \dfrac{L+R-2}{2}$ \\ 
\\
which is equivalent to $r > 10^{\frac{L+R-2}{2}}$ by taking logarithm in base 10.\\
$\implies \ {\dfrac{10^{\frac{ L}{2}}}{r^\frac{ L}{L + R-2}}}$ ($= \alpha$ say) $< 1$ (can be verified by taking $L^{th}$ roots)\\
\\
Now given any $\epsilon > 0$, since the parameter $\lambda$ can be arbitrarily large $\exists \lambda_{0}$ such that $\dfrac{C}{a}. \alpha ^{\lambda} < \epsilon$ for all $\lambda \geq \lambda_{0}$\\
\\
So \\
$\|\mathcal{P_{\lambda_{0} L}}(t_{k})\| \leq \epsilon $ .\\
Contradiction.

\end{proof}
\subsection{There exists no GP whose every term is palindrome}
\begin{proof}
Suppose we are given a GP whose first term $a$ is a palindrome and common ratio $r$ is an integer.(Here we assume gcd(a,2,3,5,11) = gcd(r,2,3,5,11) = 1) We can choose an integer $B$ such that $r^{B}$ has more digits than $a$. Therefore we look at the subsequence:\\
$ {ar^{B},ar^{2B},ar^{3B},ar^{4B}....}$ which obeys every condition for proposition 2.\\
Hence this GP will cannot consist of only palindromes. Since this is a subsequence of the original GP, it also cannot contain infinitely many palindromes.
\end{proof}

\subsection{Given the initial term and common ratio , the size of the largest GP whose every term is palindrome}

\section*{Acknowledgement}
The authors thank Prof. Anirban Mukhopadhyay  for raising the question of  investigation and the authors are grateful to the referees W. Banks, D. Hart and M. Sakata [1], for the proposition 4.2 taken from their paper.


\begin{thebibliography}{9}
\bibitem{latexcompanion} 
W. Banks, D. Hart and M. Sakata, 
\textit{‘Almost all palindromes are composite’,} 
to appear in Math. Res. Lett.
 
\bibitem{einstein} 
Geeks for Geeks  \\  
\textit{http://www.geeksforgeeks.org/given-a-number-find-next-smallest-palindrome-larger-than-this-number/} 
 
\end{thebibliography}
\end{document}